\title{Elementary Proof of the Completeness of OEIS A051221 Below 2000}
\author{Seiichi Azuma\\
\small\texttt{seazuma1@gmail.com}
}
\date{}
\theoremstyle{plain}
\newtheorem{theorem}{Theorem}[section]
\newtheorem{lemma}[theorem]{Lemma}
\theoremstyle{definition}
\theoremstyle{remark}
\numberwithin{equation}{section}
\begin{document}

\maketitle

\begin{abstract}
The OEIS sequence A051221 consists of nonnegative integers of the form $10^x - y^2$. The known values are those less than or equal to 2000 with $x \leq 7$, and it is conjectured that no new values in this range appear for $x \geq 8$. In this paper, we give an elementary proof that this is indeed the case by using Pell-type equations.
\end{abstract}

\section{Introduction}

The sequence A051221 in the OEIS\footnote{\url{https://oeis.org/A051221}} is defined as the set of all nonnegative integers that can be expressed in the form $10^x - y^2$ for integers $x \geq 0$ and $y \geq 0$. Currently, the sequence consists of
\[
0, 1, 6, 9, 10, 19, 36, 39, \dots, 1999.
\]
The current list was obtained by computing the values in the set
\[
T(X) = \bigcup_{x=1}^{X} \left\{10^x - y^2 \mid y \in \mathbb{Z} \right\} \cap [0,2000],
\]
and stopping when $T(X+1) = T(X)$. Since this equality holds at $X = 7$, the currently known elements of the sequence correspond to values of $x \leq 7$. However, this does not immediately imply that new values cannot appear for $x \geq 9$. Intuitively, it is unlikely that a square is very close to a power of 10, but proving this rigorously is nontrivial. In this paper, we present an elementary proof that $T(X) = T(7)$ for all $X \geq 7$, thereby confirming that the known list is complete within the range $[0,2000]$.

Generally, equation $a^x - y^2 = c$ is known as a generalized Lebesgue-Ramanujan-Nagell equation. A comprehensive survey by Maohua Le and Gökhan Soydan~\cite{LeSoydan2020} summarizes the known results, including the finiteness of integer solutions $(x, y)$ for fixed $a$ and $c$. For specific cases, Christian Hercher and Karl Fegert provided an elementary proof using Pell-type equations for expressions of this form~\cite{HercherFegert2025}. Our method follows a similar method.

\section{Reformulation and Strategy}
Note that when $x$ is even, the smallest possible positive value of the form $10^x-y^2$ is
\[
10^x - (10^{x/2} - 1)^2 = 2 \cdot 10^{x/2} - 1,
\]
which exceeds 2000 whenever $x/2 \geq 4$, that is, $x \geq 8$. Therefore, it is sufficient to show that no additional values of the form $10^x - y^2$ can appear for odd $x \geq 9$.

We fix an integer $c \in [0,2000]$ that is not known to be in the sequence. 
Let $x = 2u + 1$. Then the equation becomes
\[
10 \cdot (10^u)^2 - y^2 = c.
\]

This is known as Pell-type equation and can be analyzed via quadratic fields. As shown in the next section, any solution $10t^2 - s^2 = c$ can be expressed with integers $a,b,K$ satisfying:
\[
s + t \sqrt{10} = (a + b \sqrt{10})(19 + 6 \sqrt{10})^K, \quad |a| \leq 3 \sqrt{c}, \quad \sqrt{c/10} \leq |b| \leq \sqrt{c}, \quad 10b^2 - a^2 = c.
\]
For each $c$, the pair $(a,b)$ that satisfies the condition is finite, so our approach is to show that $t$ cannot be the form of $10^u$ for any integer $k$ for each $(a,b)$. We consider the sequence of integers $t_k$ defined by
\[
(a + b \sqrt{10})(19 + 6 \sqrt{10})^k = s_k + t_k \sqrt{10},
\]
and show the impossibility of $t_k = 10^u$ for any $k$ and $u \geq 4$ by showing that $t_k \equiv0 \pmod {10^4}$ and $t_k \equiv 10^m \pmod p$ do not hold simultaneously for any $m$ for some $p$.
Note that negating $a,b$ results in negating $s_k,t_k$ and that negating $b,k$ corresponds to taking the conjugate, which negates $t_k$. Hence, it is sufficient to consider only nonnegative $a$ and $b$, and to prove the impossibility of $t_k = \pm 10^u$.

\section{Main Lemma}

The key step in our argument is the following lemma.

\begin{lemma}
Let $c$ be a fixed positive integer. Suppose that there exist integers $s,t$ that satisfy the equation
\[
10t^2 - s^2 = c.
\]
Then there exist integers $a, b, K$ that satisfy
\[
s + t \sqrt{10} = (a + b \sqrt{10})(19 + 6 \sqrt{10})^K, \quad 0 \leq |a| \leq 3 \sqrt{c}, \quad \sqrt{\frac{c}{10}} \leq |b| \leq \sqrt{c}, \quad 10b^2 - a^2 = c.
\]
\end{lemma}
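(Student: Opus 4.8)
The plan is to realize the equation inside the ring $\mathbb{Z}[\sqrt{10}]$: writing $N(p+q\sqrt{10}) = p^2 - 10q^2$, a pair $(s,t)$ with $10t^2 - s^2 = c$ is exactly an element $\theta = s + t\sqrt{10}$ with $N(\theta) = -c$. The unit $\varepsilon := 19 + 6\sqrt{10} = (3+\sqrt{10})^2$ has $N(\varepsilon) = 1$ and $\overline{\varepsilon} = \varepsilon^{-1}$, so for every $K \in \mathbb{Z}$ the element $\theta\varepsilon^{-K}$ again lies in $\mathbb{Z}[\sqrt{10}]$ and still has norm $-c$. Writing $\theta\varepsilon^{-K} = a + b\sqrt{10}$ we therefore automatically get $a,b \in \mathbb{Z}$ and $10b^2 - a^2 = c$; the entire content of the lemma is then to choose $K$ so that the size bounds on $a$ and $b$ hold. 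Since $(-s,-t)$ is also a solution and negating $\theta$ only negates $(a,b)$, leaving $|a|$, $|b|$ and $10b^2-a^2$ unchanged, we may assume $\theta > 0$, hence $\alpha := \theta\varepsilon^{-K} > 0$.

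The first step is bookkeeping: from $\alpha\overline{\alpha} = -c$ we get $\overline{\alpha} = -c/\alpha$, so
\[
a = \tfrac12\!\left(\alpha - \tfrac{c}{\alpha}\right), \qquad b = \tfrac{1}{2\sqrt{10}}\!\left(\alpha + \tfrac{c}{\alpha}\right).
\]
By AM--GM, $\alpha + c/\alpha \ge 2\sqrt{c}$ for all $\alpha > 0$, so the bound $|b| \ge \sqrt{c/10}$ holds for every choice of $K$. The remaining three inequalities $|a| \le 3\sqrt c$ and $|b| \le \sqrt c$ all translate, via the displayed formulas, into the single requirement that $\alpha$ lie in the interval $\big[\sqrt c(\sqrt{10}-3),\ \sqrt c(\sqrt{10}+3)\big]$.

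The key observation --- and what makes the constants in the lemma fit together --- is that the two endpoints of this interval differ by the factor $\dfrac{\sqrt{10}+3}{\sqrt{10}-3} = (\sqrt{10}+3)^2 = 19 + 6\sqrt{10} = \varepsilon$. Consequently the half-open interval $I := \big[\sqrt c(\sqrt{10}-3),\ \sqrt c(\sqrt{10}+3)\big)$ is a fundamental domain for multiplication by $\varepsilon$ on $(0,\infty)$, so there is a unique $K \in \mathbb{Z}$ with $\alpha = \theta\varepsilon^{-K} \in I$. For this $K$ one evaluates the two functions $\alpha \mapsto \alpha + c/\alpha$ and $\alpha \mapsto \alpha - c/\alpha$ at the endpoints of $I$, using $1/(\sqrt{10}\pm 3) = \sqrt{10}\mp 3$; monotonicity on $(\sqrt c,\infty)$ and the symmetric behavior about $\alpha = \sqrt c$ then give $2\sqrt c \le \alpha + c/\alpha \le 2\sqrt{10c}$ and $|\alpha - c/\alpha| \le 6\sqrt c$, that is, exactly $\sqrt{c/10} \le |b| \le \sqrt c$ and $0 \le |a| \le 3\sqrt c$. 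Taking $(a,b)$ as above finishes the argument.

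I do not expect a real obstacle beyond identifying the correct interval $I$; the two points demanding care are (i) checking $\overline{\varepsilon} = \varepsilon^{-1}$, which is what guarantees the norm is genuinely preserved under $\theta \mapsto \theta\varepsilon^{-K}$ (not merely multiplied by a unit), and (ii) the boundary case in which $\alpha$ lands on an endpoint of $I$ --- this forces $c$ to be a perfect square and is harmless, since every inequality in the lemma is non-strict. The genuinely clever part is noticing that the interval forced by the bound $|b| \le \sqrt c$ has endpoint ratio exactly $\varepsilon$ and, simultaneously, is precisely the interval on which $|a| \le 3\sqrt c$.
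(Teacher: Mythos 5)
Your proposal is correct and follows essentially the same route as the paper: both normalize $s+t\sqrt{10}$ by a power of the unit $19+6\sqrt{10}=(3+\sqrt{10})^2$ so that the result lands in the interval $\bigl[\sqrt{c}(\sqrt{10}-3),\,\sqrt{c}(\sqrt{10}+3)\bigr]$, and then read off the bounds on $a$ and $b$ from the endpoints of the hyperbola $10b^2-a^2=c$ over that interval. Your write-up is merely more explicit than the paper's (which chooses $K$ by a floor of a logarithm and says ``by analyzing the endpoints''), spelling out $a=\tfrac12(\alpha-c/\alpha)$, $b=\tfrac{1}{2\sqrt{10}}(\alpha+c/\alpha)$ and the monotonicity argument.
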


\begin{proof}
We first assume $s + t \sqrt{10} > 0$. We define $K$ as
\[
K = \left\lfloor \frac{\log(s + t \sqrt{10}) - \log \sqrt{c}}{\log(19 + 6 \sqrt{10})} + \frac{1}{2} \right\rfloor.
\]
This implies
\[
(19 + 6 \sqrt{10})^{k - 1/2} \leq \frac{s + t \sqrt{10}}{\sqrt{c}} \leq (19 + 6 \sqrt{10})^{k + 1/2}.
\]
We then define $a,b$ by
\[
a + b \sqrt{10} = \frac{s + t \sqrt{10}}{(19 + 6 \sqrt{10})^K}.
\]
Since $19 + 6 \sqrt{10}$ is a unit in $\mathbb{Z}[\sqrt{10}]$, $a$ and $b$ are integers, and we have
\[
10b^2 - a^2 = 10t^2 - s^2 = c.
\]

Note that $\sqrt{19 + 6 \sqrt{10}} = 3 + \sqrt{10}$, so the inequality above implies
\[
\sqrt{c}(-3 + \sqrt{10}) \leq a + b \sqrt{10} \leq \sqrt{c}(3 + \sqrt{10}).
\]
Now consider the set of real numbers of the form $a + b \sqrt{10}$ satisfying this inequality. These values correspond to the points $(a, b)$ in the hyperbola
\[
10b^2 - a^2 = c
\]
within a bounded region. By analyzing the endpoints, we have
\[
-3\sqrt{c} \leq a \leq 3\sqrt{c}, \quad \sqrt{c/10} \leq b \leq \sqrt{c},
\]
since the smallest value of $b$ occurs when $a = 0$, which gives $10b^2 = c$.

When $s + t \sqrt{10} < 0$, we apply the same discussion to $-s - t \sqrt{10}$ and conclude similarly that
\[
-3\sqrt{c} \leq a \leq 3\sqrt{c}, \quad \sqrt{c/10} \leq -b \leq \sqrt{c}.
\]
\end{proof}

\section{Example}

Let $c = 31$, which is not in the known list of values. Then, the only integer pair $(a,b) = (3,2)$ satisfies the conditions
\[
0 \leq a \leq 3 \sqrt{c}, \quad \sqrt{\frac{c}{10}} \leq b \leq \sqrt{c}, \quad 10b^2 - a^2 = c.
\]

Define the sequence of integers $t_k$ by
\[
(a + b \sqrt{10})(19 + 6 \sqrt{10})^k = s_k + t_k \sqrt{10}.
\]

The sequence $\{t_k\}$ satisfies the recurrence:
\[
t_0 = b = 2, \quad t_1 = 6a + 19b = 56, \quad t_{k+2} = 38 t_{k+1} - t_k.
\]

Let $N = 10^4$ and $p = 160001$.  The sequences $(t_k \bmod N)$ and $(t_k \bmod p)$ are linear recurrence sequences modulo fixed moduli and, therefore, eventually periodic. In this case, these sequences have a common period $40000$, and we find that $t_k \equiv 0 \pmod{N}$ occurs precisely when $k \equiv 3309 \pmod{5000}$. By computing $t_k \bmod p$ at those values of $k$, we have
\[
t_k \equiv 4354,\, 121626,\, 16949,\, 146265,\, 155647,\, 38439,\, 143052,\, 13736 \pmod{p}.
\]

We also observe that the set of residues $\{\pm 10^m \bmod p\}$ forms a multiplicative subgroup of order $1250$ in $\mathbb{F}_p^\times$, and none of the eight residues above belongs to this subgroup.

Therefore, we conclude that $t_k = \pm 10^u$ does not occur for any $k$ and $u \geq 4$, which confirms that $c = 31$ cannot appear in the form $10^x - y^2$ for any $x \geq 9$.

\section{Whole Computation}

We now describe the complete computation that verifies the completeness of the sequence A051221 within the interval $[0,2000]$.

For every integer $c \in [0,2000]$ that is not contained in the known list of the form $10^x - y^2$ for $x \leq 7$, we enumerate all integer pairs $(a,b)$ satisfying the following conditions:
\[
0 \leq a \leq 3 \sqrt{c}, \quad \sqrt{\frac{c}{10}} \leq b \leq \sqrt{c}, \quad 10b^2 - a^2 = c.
\]
For each valid pair $(a,b)$, we define the sequence $t_k$ by
\[
t_0 = b, \quad t_1 = 6a + 19b, \quad t_{k+2} = 38t_{k+1} - t_k.
\]

We compute $t_k \mod N$ and $t_k \mod p$ for
\[
N = 10^4, \quad p = 160001,
\]
and verify whether $t_k \equiv 0 \pmod{N}$ and $t_k \equiv \pm 10^m \pmod{p}$ can hold simultaneously as in the previous example.

This method successfully excludes most cases except for the following five cases:
\[
(a,b) = (22,8),\ (38,16),\ (68,24),\ (67,24),\ (3,12).
\]
We handle these remaining cases taking $p = 1601$ instead of $p=160001$, where the multiplicative subgroup $\{\pm 10^m \bmod p\}$ has size 200. For each of these five cases, we confirm that none of the $t_k$ values congruent with $0 \pmod{N}$ modulo $p$ belong to the subgroup $\{\pm 10^m \bmod p\}$, thus eliminating all possibilities. This completes the proof that no new values of the form $10^x - y^2$ appear in the interval $[0,2000]$ for $x \geq 9$. All computations were performed using Google Colab, and the corresponding notebook is available at:

\url{https://colab.research.google.com/drive/1c77O20BIBMl_UMuH4MmnhKy0POM9LBc-?usp=sharing}

\bibliographystyle{amsplain}

\begin{thebibliography}{99}

\bibitem{LeSoydan2020}
M.~Le and G.~Soydan,
\textit{A brief survey on the generalized Lebesgue--Ramanujan--Nagell equation},
Surv. Math. Appl. \textbf{15} (2020), 473--523.
ISSN 1842-6298 (electronic), 1843-7265 (print).

\bibitem{HercherFegert2025}
C.~Hercher and K.~Fegert,
\textit{Triangular numbers with a single repeated digit},
J. Integer Seq. \textbf{28} (2025), Article 25.2.1.

\end{thebibliography}

\end{document}